\documentclass[smallextended,referee,envcountsect,]{svjour3}
\smartqed

\begin{document}

\title{New strategies for some pursuit-evasion differential games with Gronwall-type constraints}
\titlerunning{New strategies for some pursuit-evasion differential games}

\author{Jamilu Adamu$^{1,4}$. Mehdi Salimi$^{2,\clubsuit}$. Farouq Abba Wabi$^{3}$. Jewaidu Rilwan$^{4}$}
\authorrunning{ Jamilu, A. et al. }
\institute{Jamilu Adamu  \at
              jamiluadamu88@fugashua.edu.ng 
           \and
             Mehdi Salimi  \at
              mehdi.salimi@kpu.ca
              \and
            Farouq Abba Wabi \at
              farouq.wabi@fuhsa.edu.ng 
               \and
             Jewaidu Rilwan   \at
              jrilwan.mth@buk.edu.ng
             \and   
 \begin{itemize}
 \item[$1$] Department of Mathematics, Federal University Gashua, P.M.B. 1005, Gashua, Yobe State, Nigeria.
 \item[$2$] Mathematics Department, Kwantlen Polytechnics University, BC, Canada.
 \item[$3$] Department of Mathematics, Federal University of Health Sciences, Azare, Bauchi State, Nigeria.
 \item[$4$] Department of Mathematical Sciences, Bayero University Kano, P.M.B. 3011, Kano State, Nigeria.
 \item[$\clubsuit$] Corresponding author.
 \end{itemize}}
 \date{Received: date / Accepted: date}

\maketitle

\begin{abstract}
This paper studies a class of non-cooperative pursuit-evasion differential games in the sequence space $l_{2}.$ The motions of the pursuer and the evader are described by certain first-order differential equations while their control functions are subject to Gronwall-type constraints which impose history-dependent bounds on their control resources. Two fundamental problems are investigated. For the pursuit problem, new strategy  for the pursuer is constructed and sufficient conditions guaranteeing the completion of pursuit are established. For evasion problem, an admissible strategy of the evader ensuring avoidance of capture is obtained, and corresponding sufficient conditions for successful evasion are derived. The proposed approached extends existing results on differential games with classical geometric and integral constraints to a broader class of systems governed by Gronwall-type restrictions.  
\end{abstract}
\keywords{Pursuer \and Evader \and Strategy \and Gronwall-type constraints}

\newpage
\section{Introduction}
\noindent Differential games represent an important area of modern control theory dealing with dynamical systems involving several decision makers whose objectives are mutually conflicting. The foundations of the theory were established by Isaacs in his pioneering monograph on differential games \cite{Isaacs1965} where the concepts of pursuit, evasion, and optimal strategies were systematically developed. Since then, pursuit-evasion differential games have become one of the most extensively studied classes of conflict-controlled processes because of their numerous applications in military operations, missile guidance, cyber-security, robotics, air traffic control, surveillance systems, and autonomous multi-agent systems. \\

\noindent The theory of pursuit and evasion differential games has undergone substantial development over the last several decades. Significant contributions were made by Petrosyan \cite{Petrosyan 1993}, Lewin \cite{Lewin1994}, Krasovskii and Subbotin \cite{Krasovskii1974,Krasovskii1988,REF15}, Azimov \cite{Azimov 1975}, Chodun \cite{Chodun 1988} who investigated various classes of differential games and established fundamental results concerning solvability conditions, value functions, and strategy constructions. Their works laid the theoretical foundations for the study of pursuit-evasion problems in both finite-dimensional and infinite-dimensional spaces.\\

\noindent A considerable amount of research has been devoted to pursuit-evasion differential games in Hilbert spaces under different classes of control constraints. One of the earliest and most important directions concerns geometric constraints where the admissible controls satisfy point-wise bounds. Another important class involves integral constraints in which the total energy expenditure of each player is bounded. These two classes of constraints have been extensively investigated by many researchers, leading to numerous solvability results and explicit constructions of optimal and admissible strategies. (See for examples \cite{Adamu 2020,Badakaya 2022,A. J. 2022,Badakaya 2024,E. Garcia 2020,Ibragimov 2023,Ibragimov 2025,I. Ibragimov 2025,Kazimirova 2024,Petrov 2025,B. Samatov 2023,Salimi 2019}).\\

\noindent Modern differential game problems in areas such as robotics and medicine incorporate numerous practical considerations into their formulations, necessitating the introduction of additional types of constraints, including mixed constraints \cite{Adamu 2022,Ibragimov 2025,B. Samatov 2023}, Gronwall-type constraints \cite{Ibragimov 2026,J. Rilwan 2023,Rilwan 2023,B. Samatov 2020} and so on. More recently, pursuit-evasion games in infinite-dimensional spaces have been studied under mixed constraints and different types of player dynamics \cite{Adamu 2022,Badakaya 2024}. Badakaya and co-authors \cite{Badakaya 2022,A. J. 2022} considered differential games described by higher-order equations and obtained solvability conditions depending on the distribution of control resources among the players. These investigations substantially enriched the theory of differential games and demonstrated the importance of control constraints in determining the outcome of conflic-controlled processes.\\

\noindent Despite these developments, most of the existing literature is devoted to control functions subject to classical geometric, integral, or mixed constraints. In many practical situations, however, the admissible control resources depend not only on the current state of the system but also on the cumulative history of previous actions. Such situations naturally arise in systems with memory, fatigue effects, resource depletion, adaptive feedback mechanisms, and accumulated energy consumption. Classical control constraints are generally insufficient for accurately describing these phenomena because they neglect the influence of past control actions.\\

\noindent To incorporate memory effects into the mathematical model, it is natural to consider controls satisfying Gronwall-type constraints. Such constraints involve integral inequalities and provide history-dependent bounds on admissible controls. Consequently, they constitute a substantially broader class of restrictions than the standard geometric and integral constraints and allow the modelling of systems whose available resources evolve according to cumulative past behaviour.\\

\noindent Motivated by these considerations, this paper investigates a class of non-cooperative pursuit-evasion differential games in the Hilbert space $l_{2}$, where both players are subject to Gronwall-type constraints. The motions of the pursuer and the evader are governed by specified first-order differential equations. The study focuses on two fundamental problems of differential game theory: the pursuit problem and the evasion problem.\\

\noindent For the pursuit problem, we construct a new admissible strategy for the pursuer and derive sufficient conditions guaranteeing that capture occurs in finite time. For the evasion problem, an admissible strategy of the evader is proposed, and corresponding sufficient conditions ensuring avoidance of capture are established. The obtained strategies explicitly exploit the structure of the Gronwall-type constraints and reveal the influence of memory-dependent control resources on the outcome of the game.\\

\noindent The novelty of the present work lies in the introduction of new strategy constructions for pursuit and evasion under Gronwall-type restrictions and in establishing corresponding solvability conditions in the sequence space $l_{2}$. The obtained results significantly extend several known results on differential games with geometric, integral, and mixed constraints to a more general framework involving history-dependent admissible controls. Therefore, this study contributes to the further development of infinite-dimensional differential game theory and provides new tools for analyzing conflict-controlled systems with memory effects.\\

\noindent The remainder of this paper is organized as follows. Section 2 presents the formulation of the game and some preliminary results. In Section 3, new pursuit strategies are constructed and sufficient conditions for the completion of pursuit are established. Section 4 is devoted to the evasion problem, where admissible evasion strategies and corresponding solvability conditions are derived. Finally, concluding remarks and possible directions for future investigations are given in the last section.

\section{Problem Formulation}
Let the players $p_{j}, \ j \in J=\lbrace 1,2,\dots,m \rbrace $ and $e$ denote $m$ pursuers and one evader, respectively, whose dynamics obey the following system differential equations:
\begin{equation}\label{J1}
\left\{ \begin{array}{ll}
 \frac{dp_{j}}{d\xi} = \lambda(\xi)a_{j}(\xi), & p_{j}(0) = p_{j0}, \\ \\
 
 \frac{de}{d\xi} = \  \lambda(\xi)b(\xi), & e(0) = e_{0},
\end{array} \right.
\end{equation}
where $ p_{j}(\xi), e(\xi), p_{j0}, e_{0}, a_{j}(\xi), b(\xi) \in l_{2}$; $a_{j}(\cdot)$ and $b(\cdot)$ stand for measurable control functions of the players $p_{j}$ and $e$, respectively, and that $\lambda(\cdot)\geq 0$ be scalar measurable function define on the time interval $[0,\theta]$ which denotes the duration of the game.
\begin{definition}
A function $a_{j}(\xi) = (a_{j1}(\xi),a_{j2}(\xi),  \dots)$ whose coordinates $a_{ji}:[0,\theta]\rightarrow R^{1}$ are Borel measurable real-valued functions satisfying the constraints:
\begin{equation}\label{J2}
||a_{j}(\xi)||^{2} \leq \alpha_{j}^{2}+2c\int_{0}^{\xi}\lambda(s)||a_{j}(s)||^{2}ds, \ \ \ \xi \in [ 0,\theta], 
\end{equation}
is called an admissible control of the player $p_{j}$, where $c$ and $\alpha_{j} $ are given positive constants.
\end{definition}
\begin{definition}
A function $b(\xi) = (b_{1}(\xi), b_{2}(\xi),  \dots)$ whose coordinates $b_{j}:[0,\theta]\rightarrow R^{1}$ are Borel measurable real-valued functions that satisfy the constraints:
\begin{equation}\label{J3}
||b(\xi)||^{2} \leq \beta^{2}+2c\int_{0}^{\xi}\lambda(s)||b(s)||^{2}ds, \ \ \ \xi \in [0,\theta],
\end{equation}
is called an admissible control of the player $e$, where $c$ and $\beta$ are positive constants.
\end{definition}
\begin{remark} 
It can be shown that the inequalities (\ref{J2}) and (\ref{J3}) imply, respectively, that
\begin{equation}\label{lem}
\Vert a(\xi)\Vert \leq \alpha_{j} e^{c\int_{0}^{\xi}\lambda(s)ds}, \ \ \Vert b(\xi)\Vert\leq \beta e^{c\int_{0}^{\xi}\lambda(s)ds}.
\end{equation}
Also, note that if $\Vert a_{j}(\xi)\Vert= \alpha_{j} e^{c\int_{0}^{\xi}\lambda(s)ds}$ and $\Vert b(\xi)\Vert= \beta e^{c\int_{0}^{\xi}\lambda(s)ds}, $ then, the inequalities (\ref{J2}) and (\ref{J3}) are respectively satisfied. It is also important to note that (\ref{lem}) does not imply the inequalities (\ref{J2}) and (\ref{J3}).
\end{remark}
Define the sets: 
$$ A_{j}(\alpha)= \left\lbrace  a_{j}=(a_{j1}, a_{j2}, \dots )\in l_{2}:||a_{j}(\xi)||^{2} \leq \alpha_{j}^{2}+2c\int_{0}^{\xi}\lambda(s)||a_{j}(s)||^{2}ds, \ \ \xi \in [ 0,\theta]
  \right\rbrace,$$ and 
$$ B(\beta)= \left\lbrace  b=(b_{1}, b_{2}, \dots )\in l_{2}:||b(\xi)||^{2} \leq \beta^{2}+2c\int_{0}^{\xi}\lambda(s)||b(s)||^{2}ds, \ \ \xi \in [ 0,\theta] \right\rbrace.$$
\begin{remark}
The sets $A_{j}(\alpha)$ and $B(\beta)$ denote all admissible controls of players $p_{j}$ and $e$, respectively. The constraints (\ref{J2}) and (\ref{J3}) are respectively called Gronwall-type constraints. Such constraints involve integral inequalities and provide history-dependent bounds on admissible controls.
\end{remark} 
\noindent Suppose that the players $p_{j}$ and $e$ select admissible controls $a_{j}(\cdot)\in A_{j}(\alpha)$ and $b(\cdot)\in B(\beta)$,  respectively. Then, according to equation (\ref{J1}), the corresponding state variables $ p_{j}(\xi), e(\xi) \in l_{2}$ generate the trajectories of the players as follows:  
\begin{equation}\label{J4}
p_{j}(\xi) = p_{j0} + \int_{0}^{\xi}\lambda(s)a_{j}(s)ds,
\end{equation}
\begin{equation}\label{J42}
e(\xi)  =  e_{0} + \int_{0}^{\xi}\lambda(s)b(s)ds.
\end{equation}
Let
$D(y_{0},\bar{r})=\left\lbrace y \in l_{2}:\Vert y- y_{0} \Vert \leq  \bar{r} \right\rbrace$ and $ S(y_{0},\bar{r})=\left\lbrace y \in l_{2}:\Vert y- y_{0} \Vert =  \bar{r} \right\rbrace $
denote, respectively, the closed ball and the sphere in $l_{2}$ with center $y_{0}$ and radius $\bar{r}$. We now give some useful definitions.
Let $$R_{j}(\theta):=\left( \alpha_{j}^{2}+2c\int_{0}^{\theta}\lambda(s)||a_{j}(s)||^{2}ds\right)^{\frac{1}{2}}\pi(\theta) ,$$ and 
 $$r(\theta):=\left( \beta^{2}+2c\int_{0}^{\theta}\lambda(s)||b(s)||^{2}ds\right)^{\frac{1}{2}}\pi(\theta), $$ where $\pi(\theta)=\int_{0}^{\theta}\lambda(s)ds.$
\begin{definition}
A closed ball $D_{j}\left( p_{j0},R_{j}(\theta)\right) $ is called the attainability domain of the $j^{th}$ player $p_{j}$ from the initial state $p_{j0}$ at the time $\xi=0$ to the terminal time $\theta,$ if the following conditions are satisfied.
\begin{itemize}
\item[(a)] $\Vert p_{j}(\theta)- p_{j0} \Vert \leq R_{j}(\theta).$
\item[(b)] For every $\tilde{p} \in D_{j}\left( p_{j0},R_{j}(\theta)\right) $, there exists an admissible control $a_{j}(\cdot) \in A_{j}(\alpha)$ such that $p_{j}(\theta)=\tilde{p}.$
\end{itemize}
\end{definition}
This is due to the estimate:
\begin{eqnarray}
\Vert p_{j}(\theta)- p_{j0} \Vert &=& \left\Vert \int_{0}^{\theta}\lambda(r)a_{j}(r)dr \right\Vert 
\leq  \int_{0}^{\theta}\lambda(r)\Vert a_{j}(r)\Vert dr  \nonumber\\
& \leq & \int_{0}^{\theta}\lambda(r)\left( \alpha_{j}^{2}+2c\int_{0}^{r}\lambda(s)||a_{j}(s)||^{2}ds \right)^{\frac{1}{2}} dr.  \nonumber
\end{eqnarray}
 Now define $$ F(r)= \left( \alpha_{j}^{2}+2c\int_{0}^{r}\lambda(s)||a_{j}(s)||^{2}ds \right)^{\frac{1}{2}}.$$
 Since
 $$ \int_{0}^{r}\lambda(s)||a_{j}(s)||^{2}ds \leq \int_{0}^{\theta}\lambda(s)||a_{j}(s)||^{2}ds,\ \ \ 0\leq r \leq \theta , $$
 it follows that $F(r)$ is non-decreasing and $F(r)\leq F(\theta).$ Hence,
 \begin{eqnarray}
\Vert p_{j}(\theta)- p_{j0} \Vert &\leq & F(\theta)\int_{0}^{\theta}\lambda(r)dr \nonumber\\
&=& \left( \alpha_{j}^{2}+2c\int_{0}^{\theta}\lambda(s)||a_{j}(s)||^{2}ds \right)^{\frac{1}{2}}\pi(\theta). \nonumber  
\end{eqnarray}
On the other hand, for each $j \in J,$ the player $p_{j}$ can reach any point $\tilde{p} \in D_{j}\left( p_{j0},R_{j}(\theta)\right) $ using admissible control defined by 
\begin{equation}\label{Cont1}
a_{j}(\xi)=\frac{\tilde{p}-p_{j0}}{\pi(\theta)}, \ \ 0\leq \xi \leq \theta.
\end{equation}
Indeed,
\begin{eqnarray}
p_{j}(\theta) &=& p_{j0}+\int_{0}^{\theta}\lambda(\xi)a(\xi)d\xi \nonumber\\
&=& p_{j0}+\int_{0}^{\theta}\lambda(\xi)\frac{\tilde{p}-p_{j0}}{\pi(\theta)}d\xi \nonumber\\
&=& \tilde{p}. \nonumber
\end{eqnarray}
Hence $p_{j}(\theta)=\tilde{p}.$ Moreover, the admissibility of (\ref{Cont1}) follows easily from the fact that $\tilde{p} \in D_{j}\left( p_{j0},R_{j}(\theta)\right)$, that is $\Vert \tilde{p} - p_{j0} \Vert \leq R_{j}(\theta).$ Therefore,
$$
\Vert a_{j}(\xi) \Vert^{2} = \frac{\Vert \tilde{p} - p_{j0} \Vert^{2}}{\pi^{2}(\theta)}
\leq  \alpha_{j}^{2}+2c\int_{0}^{\theta}\lambda(s)||a_{j}(s)||^{2}ds.
$$
\begin{definition}\label{D4}
A closed ball $D\left( e_{0},r(\theta)\right) $ is called the attainability domain of the player $e$ from the initial state $e_{0}$ at the time $\xi=0$ to the terminal time $\theta,$ if the following conditions are satisfied.
\begin{itemize}
\item[(a)] $\Vert e(\theta)- e_{0} \Vert \leq r(\theta).$
\item[(b)] For every $\tilde{e} \in D\left( e_{0},r(\theta)\right) $, there exists an admissible control $b(\cdot) \in B(\beta)$ such that $e(\theta)=\tilde{e}.$
\end{itemize}
\end{definition}
\noindent \textbf{Game Description:} Throughout this paper, the differential game determined by the dynamics (\ref{J1}), together with the control constraints (\ref{J2}) and (\ref{J3}) imposed on the pursuers' controls $a_j(\cdot)$ and the evader's control $b(\cdot)$, respectively, will be denoted by $G^{\ast}$. The main objective of this paper is to investigate the following problems:

\medskip
\noindent \textbf{Problem 1.} Construct an admissible winning strategy for the pursuer and establish sufficient conditions under which the pursuer can guarantee the capture of the evader within the time interval $[0,\theta]$.

\medskip
\noindent \textbf{Problem 2.} Construct an admissible winning strategy for the evader and establish sufficient conditions under which the evader can guarantee successful evasion for all $\xi \geq 0$.

\section{Pursuit Problem }
Here we consider the game $G^{\ast}$ with $J=\{1\}$. The objective of the player $p$ is to guarantee the completion of pursuit by reaching the state of the player $e$, namely, $$p(\xi)=e(\xi),$$ for some time $\xi \in [0,\theta]$. In contrast, the player $e$ aims to prevent this equality from occurring throughout the game. The principal question here is to answer Problem 1 above. To address this problem, we introduce the following subset of the Hilbert space $l_{2}$: $$H=\left\{z\in l_{2}:2\langle e_{0}-p_{0},z\rangle\leq \left(R^{2}(\theta)-r^{2}(\theta)\right)+\left|\left|e_{0}\right|\right|^{2}-\left|\left|p_{0}\right|\right|^{2}\right\}.$$
The set H is a closed half-space in $l_{2}$ and will play a crucial role in establishing sufficient conditions for the solvability of the pursuit problem in the game $G^{\ast}$.\\
 
\noindent \textit{\textbf{Construction of the pursuer's strategy:}}\\
\noindent Let $\theta>0$ be an arbitrary fixed number, and consider the finite time interval $[0,\theta]\subset R$. Suppose that, at the current time $\xi$, the player $p$ has complete information about the initial positions $p_{0}$ and $e_{0}$, as well as the control function $b(\cdot)\in B(\beta)$ employed by the player $e$. We consider a pursuit problem in which the player $p$ adopts a strategy, while the player $e$ employs an arbitrary admissible control $b(\cdot)\in B(\beta)$. We now introduce the strategy of the player $p$.
\begin{definition}
A function $\tilde{a}(\xi,p_{0},e_{0},b(\cdot))$, $\tilde{a}:[0,\theta]\times l_{2}\times l_{2}\times l_{2}\rightarrow l_{2}$, is called a strategy of the player $p$ if the following conditions are satisfied:
\begin{itemize}
\item[(a)] For every $ b(\cdot) \in B(\beta)$, the function $\tilde{a}(\xi,p_{0},e_{0},b(\cdot))$ is Borel measurable for all $\xi \in [0,\theta]$ .
\item[(b)] For every $ b(\cdot) \in B(\beta)$, $\tilde{a}(\xi,p_{0},e_{0},b(\cdot)) \in A(\alpha)$ on the time interval $[0,\theta]$.
\item[(c)] The system  (\ref{J1}) with $a=\tilde{a}(\xi,p_{0},e_{0},b(\cdot))$ admits a unique solution $(p(\cdot),e(\cdot))$. 
\end{itemize}
\end{definition}
Let the strategy of the player $p$ be defined by:
\begin{equation}\label{J5}
a(\xi) =
\frac{e_{0} - p_{0}}{\pi(\theta)} + \int_{0}^{\theta}\frac{\lambda(s)b(s)}{\pi(\theta)}ds, \ \  0 \leq \xi \leq \theta,
\end{equation}  
where $\pi(\theta)=\int_{0}^{\theta}\lambda(s)ds.$
\begin{lemma} \label{J52}
If $e(\theta)\in H$, then the strategy (\ref{J5}) is admissible for every $b(\cdot) \in B(\beta)$ and for all $\xi \in [0,\theta]$. 
\end{lemma}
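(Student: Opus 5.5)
The plan is to exploit that the strategy (\ref{J5}) is constant in $\xi$ and can be rewritten through the evader's terminal state. By (\ref{J42}), $\int_{0}^{\theta}\lambda(s)b(s)\,ds=e(\theta)-e_{0}$, so
\[
a(\xi)=\frac{e_{0}-p_{0}+e(\theta)-e_{0}}{\pi(\theta)}=\frac{e(\theta)-p_{0}}{\pi(\theta)},\qquad 0\le\xi\le\theta .
\]
The constraint (\ref{J2}) has a nonnegative integral term, so it suffices to prove $\Vert a(\xi)\Vert^{2}\le\alpha^{2}$. This is stronger than what is needed, but I expect it to follow from the definition of $R(\theta)$. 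Concretely, I aim to show
\[
\Vert e(\theta)-p_{0}\Vert\le R(\theta)=\Big(\alpha^{2}+2c\int_{0}^{\theta}\lambda(s)\Vert a(s)\Vert^{2}ds\Big)^{1/2}\pi(\theta).
\]
Dividing this by $\pi(\theta)$ gives exactly (\ref{J2}) at $\xi=\theta$. This mirrors the admissibility argument for (\ref{Cont1}): it shows that $e(\theta)$ lies in the attainability domain $D(p_{0},R(\theta))$.

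Next I would translate the hypothesis $e(\theta)\in H$ into a distance comparison. Expanding squares gives
\[
\Vert z-p_{0}\Vert^{2}-\Vert z-e_{0}\Vert^{2}=2\langle e_{0}-p_{0},z\rangle+\Vert p_{0}\Vert^{2}-\Vert e_{0}\Vert^{2}.
\]
Hence $z\in H$ is equivalent to
\[
\Vert z-p_{0}\Vert^{2}-R^{2}(\theta)\le\Vert z-e_{0}\Vert^{2}-r^{2}(\theta).
\]
Taking $z=e(\theta)$ and using part (a) of Definition \ref{D4}, namely $\Vert e(\theta)-e_{0}\Vert\le r(\theta)$ (the estimate proved above for the pursuer applies verbatim to the evader), the right-hand side is $\le 0$. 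Therefore $\Vert e(\theta)-p_{0}\Vert\le R(\theta)$.

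The remaining step, which I expect to be the main obstacle, is to pass from the inequality at $\xi=\theta$ to every $\xi\in[0,\theta]$. The integral term $2c\int_{0}^{\xi}\lambda\Vert a\Vert^{2}$ is nondecreasing in $\xi$, so the bound at $\xi=\theta$ does not automatically give it for smaller $\xi$. There is also some circularity, since $R(\theta)$ itself depends on $a$.

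I would handle this by solving for the constant norm. Set $\Vert a\Vert^{2}=K$. Then (\ref{J2}) reads $K\le\alpha^{2}+2cK\pi(\xi)$, which is hardest at $\xi=0$, where it becomes $K\le\alpha^{2}$. So I need $\Vert e(\theta)-p_{0}\Vert\le\alpha\pi(\theta)$, i.e. I need to interpret $R(\theta)$ as $\alpha\pi(\theta)$ when the game is set up. If instead the paper's $R(\theta)$ is taken with the integral evaluated along this very constant control, the inequality $K\pi^{2}(\theta)\le(\alpha^{2}+2cK\pi(\theta))\pi^{2}(\theta)$ can be used directly to verify admissibility at $\xi=\theta$. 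For intermediate $\xi$ I would then argue as in the remark after (\ref{J2}), following the paper's convention that admissibility of a constant control is checked through this terminal-time bound, exactly as was done for (\ref{Cont1}).

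Measurability is trivial because $a$ is constant in $\xi$. Uniqueness of the solution to (\ref{J1}) then follows from the explicit formulas (\ref{J4}) and (\ref{J42}).
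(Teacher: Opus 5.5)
Your first two steps are the paper's argument in a different form. The paper expands $\Vert a(\xi)\Vert^{2}$ into three terms, bounds $\Vert\int_{0}^{\theta}\lambda(s)b(s)ds\Vert^{2}$ by $r^{2}(\theta)$, and bounds the cross term using $e(\theta)\in H$. That is exactly your expansion of $\Vert e(\theta)-p_{0}\Vert^{2}$ about $e_{0}$ combined with your reformulation of $H$. Both routes arrive at
\[
\Vert a(\xi)\Vert^{2}\le\alpha^{2}+2c\int_{0}^{\theta}\lambda(s)\Vert a(s)\Vert^{2}ds ,
\]
which is (\ref{J2}) at $\xi=\theta$ only; the paper stops there and declares (\ref{J5}) admissible. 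The genuine gap is the one you flagged, and your way around it does not close it. Appealing to a ``convention'' that a terminal-time bound suffices is not an argument. The paper's check of (\ref{Cont1}) has the same defect, and the remark containing (\ref{lem}) concerns controls of norm $\alpha e^{c\pi(\xi)}$, not constant ones.

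Your own constant-norm computation in fact shows that no argument can close the gap under the stated hypothesis. Since $\pi(0)=0$, a control of constant norm $\Vert a\Vert^{2}=K$ satisfies (\ref{J2}) on $[0,\theta]$ if and only if $K\le\alpha^{2}$. So this is an equivalent requirement, not a stronger one as you first suggested. By contrast, $e(\theta)\in H$, with $R(\theta)$ evaluated along (\ref{J5}) as its definition dictates, yields at best $K(1-2c\pi(\theta))\le\alpha^{2}$, which is vacuous once $2c\pi(\theta)\ge 1$.

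Concretely, take $\lambda\equiv 1$, $\theta=c=\alpha=\beta=1$, $b\equiv 0$ and $\Vert e_{0}-p_{0}\Vert=2$. Then $e(\theta)=e_{0}$, $a\equiv e_{0}-p_{0}$, $R^{2}(\theta)=1+8=9$ and $r^{2}(\theta)=1$. So $e(\theta)\in H$, because $\Vert e_{0}-p_{0}\Vert^{2}=4\le R^{2}(\theta)-r^{2}(\theta)=8$. Yet (\ref{J2}) reads $4\le 1+8\xi$, which fails for every $\xi<3/8$. The lemma as stated is therefore false, and the paper's proof only establishes the inequality at $\xi=\theta$.

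Your first alternative is the right repair. If $H$ is defined with $\alpha\pi(\theta)$ in place of $R(\theta)$, your distance comparison gives $\Vert e(\theta)-p_{0}\Vert\le\alpha\pi(\theta)$, hence $\Vert a\Vert\le\alpha$. Then (\ref{J2}) holds for all $\xi$ because the integral term is nonnegative. That is a complete proof, but of a modified lemma, and you should present it as such rather than relying on the terminal-time check.
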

\begin{proof}
For the strategy (\ref{J5}), we have
\begin{eqnarray} \label{ADM}
||a(\xi)||^{2} &=&\left|\left|\frac{e_{0} - p_{0}}{\pi(\theta)} + \int_{0}^{\theta}\frac{\lambda(s)b(s)}{\pi(\theta)}ds\right|\right|^{2} \nonumber\\
& =& \left|\left|\frac{e_{0} - p_{0}}{\pi(\theta)}\right|\right|^{2}+ \left|\left|\int_{0}^{\theta}\frac{\lambda(s)b(s)}{\pi(\theta)}ds\right|\right|^{2}+2\left\langle\frac{e_{0} - p_{0}}{\pi(\theta)},\int_{0}^{\theta}\frac{\lambda(s)b(s)}{\pi(\theta)}ds\right\rangle  \nonumber\\
&\leq & \frac{||e_{0}-p_{0}||^{2}}{\pi^{2}(\theta)}+ \frac{1}{\pi^{2}(\theta)} 
\left( \int_{0}^{\theta}\lambda(s)\left|\left| b(s)\right|\right|ds\right)^{2} \nonumber\\ 
&+&\frac{2}{\pi^{2}(\theta)}\left\langle e_{0}-p_{0},\int_{0}^{\theta}\lambda(s)b(s)ds\right\rangle .
\end{eqnarray}
Now we estimate second and third term of (\ref{ADM}). We begin with the second term and use the fact that  $b(\cdot) \in B(\beta)$, that is
$$||b(s)|| \leq \left( \beta^{2}+2c\int_{0}^{s}\lambda(r)||b(r)||^{2}dr\right)^{\frac{1}{2}} , \ \ \ s \in [ 0,\theta],$$ 
by this therefore, we have
\begin{eqnarray}\label{TRM2}
\int_{0}^{\theta}\lambda(s)\Vert b(s)\Vert ds  
& \leq & \int_{0}^{\theta}\lambda(s)\left( \beta^{2}+2c\int_{0}^{s}\lambda(r)||b(r)||^{2}dr \right)^{\frac{1}{2}} ds \nonumber\\
& \leq & \int_{0}^{\theta}\lambda(s)\left( \beta^{2}+2c\int_{0}^{\theta}\lambda(r)||b(r)||^{2}dr \right)^{\frac{1}{2}} ds \nonumber\\
&=& \left( \beta^{2}+2c\int_{0}^{\theta}\lambda(r)||b(r)||^{2}dr \right)^{\frac{1}{2}}\int_{0}^{\theta}\lambda(s)ds \nonumber\\
&=& r(\theta).
\end{eqnarray}
This estimates second term of (\ref{ADM}). To estimates third term, we use the fact that $e(\theta) \in H$. That is 
\begin{equation}\label{J53}
2\langle e_{0}-p_{0},e(\theta)\rangle\leq\left(R^{2}(\theta)-r^{2}(\theta)\right)
+||e_{0}||^{2}-||p_{0}||^{2}.
\end{equation}
In view of (\ref{J42}) and (\ref{J53}), we obtain the following estimation
\begin{eqnarray}\label{TRM3}
2\left\langle e_{0}-p_{0},\int_{0}^{\theta}\lambda(s)b(s)ds\right\rangle
 & = & 2\langle e_{0}-p_{0},e(\theta)-e_{0}\rangle \nonumber\\
& = & 2\langle e_{0}-p_{0},e(\theta)\rangle-2\langle e_{0}-p_{0},e_{0}\rangle \nonumber\\
& = & 2\langle e_{0}-p_{0},e(\theta)\rangle+ 2\langle e_{0},p_{0}\rangle-2||e_{0}||^{2} \nonumber\\
& \leq & R^{2}(\theta)-r^{2}(\theta) -||e_{0}-p_{0}||^{2}. 
\end{eqnarray}
We use the inequalities (\ref{TRM2}) and (\ref{TRM3}) in (\ref{ADM}) and then we have
 \begin{eqnarray} 
||a(\xi)||^{2} 
& \leq & \frac{||e_{0}-p_{0}||^{2}}{\pi^{2}(\theta)}+\frac{1}{\pi^{2}(\theta)}r^{2}(\theta)+\frac{1}{\pi^{2}(\theta)}\left(R^{2}(\theta)-r^{2}(\theta)-||e_{0}-p_{0}||^{2}\right) 
\nonumber\\
&=&\frac{1}{\pi^{2}(\theta)}\left( \left(\alpha^{2}+2c\int_{0}^{\theta}\lambda(s)||a(s)||^{2}ds  \right)^{\frac{1}{2}}\pi(\theta)\right) ^{2} \nonumber\\
&=& \alpha^{2}+2c\int_{0}^{\theta}\lambda(s)||a(s)||^{2}ds .\nonumber
\end{eqnarray}
Hence the strategy (\ref{J5}) is admissible.
\end{proof}
\begin{definition}
We say that pursuit can be completed in the game $G^{\ast}$ at the time $\theta$, if there exists a strategy $\tilde{a}(\xi,p_{0},e_{0},b(\cdot))$ of the pursuer such that $p(\theta)=e(\theta)$ for every admissible control $b(\cdot) \in B(\beta)$ of the evader.
\end{definition}
The following theorem provides a sufficient condition for completion of pursuit.
\begin{theorem}
Let $e(\theta)\in H$ in the game $G^{\ast}$. Then the strategy (\ref{J5}) guarantees catching the evader on the time interval $\left[0, \theta \right] $.
\end{theorem}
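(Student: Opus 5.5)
The proof splits into two parts: admissibility of the strategy (\ref{J5}), and the terminal identity $p(\theta)=e(\theta)$.

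\emph{Admissibility.} Since $e(\theta)\in H$ by hypothesis, Lemma~\ref{J52} applies directly. For every $b(\cdot)\in B(\beta)$, the control (\ref{J5}) belongs to $A(\alpha)$ on $[0,\theta]$. It is Borel measurable in $\xi$, being constant in $\xi$. The system (\ref{J1}) with this $a$ has the unique solution given by (\ref{J4})--(\ref{J42}), because the right-hand sides do not depend on the state. So (\ref{J5}) is a strategy of $p$ in the sense of the definition. Strictly speaking, the value $a(\xi)$ depends on $\int_0^\theta\lambda(s)b(s)\,ds$, that is, on the whole evader control. This is allowed here, since the pursuer is assumed to know $b(\cdot)$, and I take that convention as given.

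\emph{Capture at $\theta$.} Substitute (\ref{J5}) into (\ref{J4}) at $\xi=\theta$. Because $a(\xi)$ is the constant vector
$$\frac{e_{0}-p_{0}}{\pi(\theta)}+\frac{1}{\pi(\theta)}\int_{0}^{\theta}\lambda(s)b(s)\,ds,$$
we get
$$p(\theta)=p_{0}+\int_{0}^{\theta}\lambda(\xi)\,d\xi\cdot\frac{1}{\pi(\theta)}\left(e_{0}-p_{0}+\int_{0}^{\theta}\lambda(s)b(s)\,ds\right)=e_{0}+\int_{0}^{\theta}\lambda(s)b(s)\,ds.$$
By (\ref{J42}), the right-hand side is $e(\theta)$. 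Hence $p(\theta)=e(\theta)$ for every admissible $b(\cdot)$, which is exactly completion of pursuit at time $\theta$ in the sense of the definition. Capture therefore occurs within $[0,\theta]$.

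\emph{Main obstacle.} The only nontrivial point is admissibility, and Lemma~\ref{J52} already handles it. I would also note that $\pi(\theta)>0$ is needed for (\ref{J5}) to make sense. This is implicitly assumed, since otherwise neither player can move and the attainability domains degenerate. With the lemma in hand, the remaining work is the one-line integral computation above.
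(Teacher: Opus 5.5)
Your argument is the paper's own. You get admissibility from Lemma~\ref{J52}, then compute $p(\theta)=e_0+\int_0^\theta\lambda(s)b(s)\,ds=e(\theta)$, and that computation is fine. The gap is the admissibility step you delegate to the lemma, because Lemma~\ref{J52} is false.

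Its proof ends with $\Vert a(\xi)\Vert^2\le\alpha^2+2c\int_0^{\theta}\lambda(s)\Vert a(s)\Vert^2ds$, with upper limit $\theta$. But (\ref{J2}) demands upper limit $\xi$ for each $\xi\in[0,\theta]$, and the former does not imply the latter. For the constant control (\ref{J5}), write $K=\Vert a\Vert^2$. Then (\ref{J2}) reads $K\le\alpha^2+2cK\pi(\xi)$, and taking $\xi=0$ shows this is equivalent to $K\le\alpha^2$, i.e.\ to $\Vert e(\theta)-p_0\Vert\le\alpha\pi(\theta)$. What the lemma actually checks is $K\le\alpha^2+2cK\pi(\theta)$. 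That condition is vacuous once $2c\pi(\theta)\ge1$, since $R(\theta)$ is built from the very control being tested.

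Concretely, let $\lambda\equiv1$, $c=\theta=\alpha=\beta=1$, $p_0=0$, $\Vert e_0\Vert=5$, $b\equiv0$. Then $e(\theta)=e_0$, $R^2(\theta)=51$, $r^2(\theta)=1$, and $e_0\in H$ because $50\le75$. Yet $\Vert a(\xi)\Vert=5$ violates (\ref{J2}) for $\xi<0.48$. In fact no admissible control captures here: by (\ref{lem}), $\Vert p(\xi)-p_0\Vert\le\int_0^\xi\exp(s)\,ds<2$ on $[0,1]$.

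Here $H$ reduces to $r^2(\theta)\le1+\Vert e(\theta)-p_0\Vert^2+\Vert e(\theta)-e_0\Vert^2$. This holds for \emph{every} admissible $b$, since $r^2(\theta)\le\exp(2)<10$ while $\Vert e(\theta)-p_0\Vert\ge5-\int_0^1\exp(s)\,ds>3$. So the statement is false as written, however the quantifier on $b$ is read, and the paper's proof has the same hole. You flagged admissibility as the only nontrivial point, so that is exactly where you needed to test the lemma against (\ref{J2}) rather than take it on trust.

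To close the gap you must change either the hypothesis or the control. If you keep (\ref{J5}), you need $\Vert e(\theta)-p_0\Vert\le\alpha\pi(\theta)$ for every admissible $b$.

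More naturally, let the pursuer exploit the Gronwall growth. Take $a(\xi)=\exp(c\pi(\xi))\,w$ with $w=c\,(e(\theta)-p_0)/(\exp(c\pi(\theta))-1)$.
\begin{itemize}
\item By (\ref{E11}), $\Vert a(\xi)\Vert^2=\Vert w\Vert^2+2c\int_0^\xi\lambda(s)\Vert a(s)\Vert^2ds$, so (\ref{J2}) holds iff $\Vert w\Vert\le\alpha$.
\item Since $\int_0^\theta\lambda(\xi)\exp(c\pi(\xi))\,d\xi=(\exp(c\pi(\theta))-1)/c$, you get $p(\theta)=e(\theta)$ exactly as in your computation.
\item Combined with $\Vert e(\theta)-e_0\Vert\le\beta(\exp(c\pi(\theta))-1)/c$ from (\ref{lem}), this gives capture against every admissible $b$ whenever $\Vert e_0-p_0\Vert\le(\alpha-\beta)(\exp(c\pi(\theta))-1)/c$.
\end{itemize}
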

\begin{proof}
Let the pursuer applies the strategy (\ref{J5}), clearly by lemma (\ref{J52}) the strategy (\ref{J5}) is admissible. Next we show that $p(\theta)=e(\theta)$. 
\begin{eqnarray}
p(\theta) &=& p_{0}+\int_{0}^{\theta}\lambda(\xi)\left(\frac{e_{0} - p_{0}}{\pi(\theta)} + \int_{0}^{\theta}\frac{\lambda(s)b(s)}{\pi(\theta)}ds\right)d\xi   \nonumber\\
& = & p_{0}+\frac{e_{0}-p_{0}}{\pi(\theta)}\int_{0}^{\theta}\lambda(\xi)d\xi
+\int_{0}^{\theta}\int_{0}^{\theta}\lambda(\xi)\frac{\lambda(s)b(s)}{\pi(\theta)}dsd\xi \nonumber\\
& = & e_{0}+\frac{1}{\pi(\theta)}\int_{0}^{\theta}\int_{0}^{\theta}\lambda(\xi)\lambda(s)b(s)dsd\xi \nonumber\\
& = & e_{0}+\frac{1}{\pi(\theta)}\int_{0}^{\theta}\lambda(\xi)d\xi\int_{0}^{\theta}\lambda(s)b(s)ds \nonumber\\
& = & e(\theta) \nonumber
\end{eqnarray}
Hence the proof of the theorem is completed. 
\end{proof}
\section{Evasion Problem}
\begin{definition}
We say that evasion is successful in the game $G^{\ast}$ on the time interval $[0,\infty)$ if there exists an admissible strategy $b(\xi)$ for the evader such that $p(\xi)\neq e(\xi), \qquad \forall, \xi\geq 0,
$ for every admissible control $a(\cdot)\in A(\alpha)$ of the pursuer.
\end{definition}
\begin{definition}
A function $\tilde{b}(\xi,p_{0},e_{0},a(\cdot))$, $\tilde{b}:[0, \infty) \times l_{2} \times l_{2} \times l_{2}\rightarrow l_{2},$ is called the evader's winning strategy if the following conditions hold true.
\begin{itemize}
\item[(a)]  For every $ a(\cdot) \in A(\alpha)$, the function $\tilde{b}(\xi,p_{0},e_{0},a(\cdot))$ is  Borel measurable for all $\xi \geq 0$ .
\item[(b)] For every $ a(\cdot) \in A(\alpha)$, $\tilde{b}(\xi,p_{0},e_{0},a(\cdot)) \in B(\beta)$ for all $\xi \geq 0$ .
\item[(c)] The system  (\ref{J1}) with $b=\tilde{b}(\xi,p_{0},e_{0},a(\cdot))$ has a unique solution $(p(\cdot),e(\cdot))$. 
\end{itemize}
\end{definition}
\noindent \textit{\textbf{Construction of the evader's strategy:}}\\
\noindent Let $p_{0}\neq e_{0}, \ \varrho=\frac{p_0-e_0}{||p_0-e_0||}$, $\pi(\xi)=\int_{0}^{\xi}\lambda(s)ds$ and $\tau=\beta^{2}-\alpha^{2}\geq 0$. It is not difficult to show that
\begin{equation}\label{E11}
e^{2c\pi(\xi)}=1+2c\int_{0}^{\xi}\lambda(s)e^{2c\pi(s)}ds.
\end{equation}
Defined a resolving function as follows:  
\begin{equation}\label{P11}
\Lambda(\xi)=\langle a(\xi),\varrho\rangle + \sqrt{\tau e^{2c\pi(\xi)}+\langle a(\xi),\varrho\rangle^{2}},
\end{equation}
where $\langle a(\xi),\varrho\rangle$ is the scalar product of the vectors $a(\xi)$ and $\varrho$ in $l_{2}$. We can easily see that the function (\ref{P11}) is well-defined, continuous and non-negative for all $a(\cdot)\in A(\alpha)$. With this and using the condition $\lambda(\xi)\geq 0$ for all $\xi\geq 0$, we have
\begin{equation} \label{P13}
\int_{0}^{\xi}\lambda(s)\Lambda(s)ds \geq 0.
\end{equation}
 We define strategy of the evader as follows:  
\begin{equation}\label{PS2}
b(\xi) = \left\{\begin{array}{ll}
a(\xi)-\Lambda(\xi)\varrho, & 0 \le \xi \le \theta\\
0,   &  \xi > \theta  
\end{array}\right.
\end{equation}  
The following theorem address \textbf{problem 2} above.   
\begin{theorem}\label{TRM1}
Let $e_{0}\neq p_{0}$, then the evader's strategy (\ref{PS2}) ensures 
$p(\xi)\neq e(\xi)$ for all $\xi \geq 0$ in the game $G^{\ast}$.
\end{theorem}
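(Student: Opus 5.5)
The plan is to compute $e(\xi)-p(\xi)$ explicitly under the strategy (\ref{PS2}). On $[0,\theta]$ this difference stays on the ray through $e_0-p_0$ and moves away from the origin. For $\xi>\theta$ both trajectories are frozen. Admissibility of (\ref{PS2}) is checked along the way, since the definition of a winning strategy requires it.

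\emph{Step 1 (admissibility on $[0,\theta]$).} By (\ref{P11}), $\Lambda(\xi)$ is the nonnegative root of
$$\Lambda^2-2\langle a(\xi),\varrho\rangle\Lambda-\tau e^{2c\pi(\xi)}=0.$$
Expanding $\|a(\xi)-\Lambda(\xi)\varrho\|^2$ with $\|\varrho\|=1$ therefore gives
$$\|b(\xi)\|^2=\|a(\xi)\|^2+\tau e^{2c\pi(\xi)},\qquad 0\le\xi\le\theta .$$
Now add two inequalities:
\begin{itemize}
\item the pursuer's constraint (\ref{J2}), namely $\|a(\xi)\|^2\le\alpha^2+2c\int_0^\xi\lambda\|a\|^2ds$;
\item the identity (\ref{E11}) multiplied by $\tau\ge 0$, namely $\tau e^{2c\pi(\xi)}=\tau+2c\int_0^\xi\lambda(s)\tau e^{2c\pi(s)}ds$.
\end{itemize}
Since $\alpha^2+\tau=\beta^2$, the sum is
$$\|b(\xi)\|^2\le\beta^2+2c\int_0^\xi\lambda(s)\|b(s)\|^2ds,$$
which is (\ref{J3}). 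Measurability of $b$ follows from that of $a$, because $\Lambda$ is a continuous function of $\langle a(\xi),\varrho\rangle$ and $\xi$. For $\xi>\theta$ we have $b=0$, and (\ref{J3}) holds trivially there because its right-hand side is at least $\beta^2$. Uniqueness of the trajectory is immediate from the integral representations (\ref{J4}) and (\ref{J42}).

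\emph{Step 2 (separation on $[0,\theta]$).} Subtract (\ref{J4}) from (\ref{J42}) with $b=a-\Lambda\varrho$; the $a$ terms cancel. Using $e_0-p_0=-\|p_0-e_0\|\varrho$, we get
$$e(\xi)-p(\xi)=-\Big(\|p_0-e_0\|+\int_0^\xi\lambda(s)\Lambda(s)ds\Big)\varrho .$$
By (\ref{P13}) the scalar in parentheses is at least $\|p_0-e_0\|>0$. Hence $\|e(\xi)-p(\xi)\|\ge\|p_0-e_0\|>0$ for all $\xi\in[0,\theta]$.

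\emph{Step 3 ($\xi>\theta$), the main obstacle.} Here $e(\xi)=e(\theta)$, while $p(\xi)=p(\theta)+\int_\theta^\xi\lambda(s)a(s)ds$. If $\lambda$ were allowed to be positive after $\theta$, an arbitrary admissible pursuer could steer $p$ to the frozen point $e(\theta)$. So this step cannot rest on the evader's control alone.

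I would resolve it using the formulation of Section~2. There $\lambda$ is defined only on $[0,\theta]$, the duration of the game, so beyond $\theta$ it is taken to be $0$. Then $\pi(\xi)=\pi(\theta)$ for $\xi>\theta$, and $p(\xi)=p(\theta)$ as well as $e(\xi)=e(\theta)$. The bound from Step 2 then extends:
$$\|e(\xi)-p(\xi)\|=\|e(\theta)-p(\theta)\|\ge\|p_0-e_0\|>0 .$$
I expect this step to be the delicate point, since it depends on the convention $\lambda\equiv0$ on $(\theta,\infty)$. Without that convention the claim for $\xi>\theta$ fails, and the first branch of (\ref{PS2}) would have to be used on all of $[0,\infty)$ instead.
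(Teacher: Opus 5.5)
Your Steps 1 and 2 are exactly the paper's argument. Summing $\|b(\xi)\|^2=\|a(\xi)\|^2+\tau e^{2c\pi(\xi)}$ with $\tau$ times (\ref{E11}) gives admissibility, and the ray identity $p(\xi)-e(\xi)=\varrho\mu(\xi)$, with $\mu(\xi)\ge\|p_0-e_0\|>0$ by (\ref{P13}), gives separation. The paper, however, uses that ray identity for every $\xi\ge0$ and silently ignores the branch $b=0$ for $\xi>\theta$. Your Step~3 is therefore more careful than the paper: a pursuer can indeed catch the frozen evader if $\lambda$ has enough mass after $\theta$. The statement does need either your convention $\lambda\equiv0$ on $(\theta,\infty)$ or the first branch of (\ref{PS2}) on all of $[0,\infty)$.
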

\begin{proof}
Firstly, we show that the strategy (\ref{PS2}) is admissible for every $a(\xi) \in A(\alpha)$, $\xi \geq 0$. Indeed
\begin{eqnarray}\label{PS3}
||b(\xi)||^{2} &=&\left|\left|a(\xi)-\langle a(\xi),\varrho\rangle \varrho - \varrho\sqrt{\tau e^{2c\pi(\xi)} +\langle a(\xi),\varrho\rangle^{2}}\right|\right|^{2} \nonumber\\
&=&\left|\left|a(\xi)-\langle a(\xi),\varrho\rangle \varrho \right|\right|^{2}+\tau e^{2c\pi(\xi)}+\langle a(\xi),\varrho\rangle^{2} \nonumber\\
&-& 2\left\langle a(\xi)-\langle a(\xi),\varrho\rangle \varrho , \varrho\sqrt{\tau e^{2c\pi(\xi)} +\langle a(\xi),\varrho\rangle^{2}}\right\rangle \nonumber\\
&=& \left|\left|a(\xi)\right|\right|^{2}-\langle a(\xi),\varrho\rangle^{2}
+\tau e^{2c\pi(\xi)}+\langle a(\xi),\varrho\rangle^{2} \nonumber\\
&=& \left|\left|a(\xi)\right|\right|^{2}+\tau e^{2c\pi(\xi)}.
\end{eqnarray}
Since $a(\xi) \in A(\alpha)$, then using (\ref{E11}), equation (\ref{PS3}) becomes 
\begin{eqnarray}
||b(\xi)||^{2}&\leq & \alpha^{2}+2c\int_{0}^{\xi}\lambda(s)\Vert a(s)\Vert^{2}ds +\tau e^{2c\pi(\xi)} \nonumber\\
& = & \alpha^{2}+2c\int_{0}^{\xi}\lambda(s)\Vert a(s)\Vert^{2}ds 
+\tau\left(1+2c\int_{0}^{\xi}\lambda(s)e^{2c\pi(\xi)} ds\right) \nonumber\\
& =& \alpha^{2}+\tau+2c\int_{0}^{\xi}\lambda(s)\left( \left|\left|a(s)\right|\right|^{2}+\tau e^{2c\pi(\xi)}\right) ds \nonumber\\
& = & \beta^{2} + 2c\int_{0}^{\xi}\lambda(s)\Vert b(s)\Vert^{2}ds
\end{eqnarray}
Hence, strategy (\ref{PS2}) is admissible. Next, we show that $p(\xi)\neq e(\xi),$ for all  $\xi \geq  0$. This is as follows:  
\begin{eqnarray}\label{JJ6}
p(\xi)-e(\xi)&=& p_{0}-e_{0}+\int_{0}^{\xi}\lambda(s)a(s)ds-\int_{0}^{\xi}\lambda(s)b(s)ds 
\nonumber\\
&=&p_{0}-e_{0} + \int_{0}^{\xi}\lambda(s)\Lambda(s)\varrho ds \nonumber\\
&=& \varrho \mu(\xi),
\end{eqnarray}
where
$$
\mu(\xi):=\Vert p_{0}-e_{0}\Vert+\int_{0}^{\xi}\lambda(s)\Lambda(s)ds.
$$ 
Clearly, $\mu(0)=\Vert p_{0}-e_{0}\Vert > 0$, then the conclusion of the Theorem (\ref{TRM1}) can be established if we show that $\mu(\xi)>0$ for all $\xi>0$. This follows from  (\ref{P13}) and hence the proof is completed. 
\end{proof}

\section{Conclusion}
\noindent This paper considered a class of non-cooperative pursuit-evasion differential games in the sequence space $l_{2}$, where the players' motions are described by first-order differential equations and their controls are constrained by Gronwall-type conditions. These constraints generalize the classical geometric and integral restrictions by incorporating history-dependent bounds on the control resources of the players.\\

\noindent For the pursuit problem, a new admissible strategy for the pursuer was constructed, and sufficient conditions guaranteeing the completion of pursuit were established. It was shown that whenever $e(\theta)\in H$ in the game $G^{\ast}$, then the strategy (\ref{J5}) guarantees catching the evader within a finite time. For the evasion problem, an admissible strategy of the evader was proposed, and sufficient conditions ensuring the impossibility of capture were derived. Under these conditions, the evader is able to avoid interception for all admissible actions of the pursuer.\\

\noindent Therefore, the obtained results provide a characterization of the solvability of both pursuit and evasion problems under Gronwall-type constraints and extend several known results in the literature concerning differential games with geometric, integral, and mixed constraints. The developed approach demonstrates that memory-dependent restrictions can be effectively incorporated into the analysis of differential games in infinite-dimensional spaces.\\

\noindent Future investigations may focus on the study of multi-player pursuit-evasion games, higher-order and fractional dynamical systems, stochastic differential games, and the determination of optimal strategies and the value of the game under Gronwall-type constraints.
 
\section*{Availability of data and materials}
No data are associated with this manuscript.

\section*{Conflict of interest}
The authors declare that they have no competing interest.



\end{document}